\newtheorem{theorem}{Theorem}[section]
\newtheorem{lemma}{Lemma}[section]
\theoremstyle{definition}
\newtheorem{definition}{Definition}[section]
\newtheorem{remark}{Remark}[section]
\begin{document}

\begin{center}
\vskip 1cm{\LARGE\bf On the Density of Ranges of Generalized Divisor Functions with Restricted Domains 
\vskip 1cm
\large
Colin Defant\footnote{This work was supported by National Science Foundation grant no. 1262930.}\\
Department of Mathematics\\
University of Florida\\
United States\\
cdefant@ufl.edu}
\end{center}
\vskip .2 in

\begin{abstract}
We begin by defining functions $\sigma_{t,k}$, which are generalized divisor functions with restricted domains. For each positive integer $k$, we show that, for $r>1$, the range of $\sigma_{-r,k}$ is a subset of the interval $\displaystyle{\left[1,\frac{\zeta(r)}{\zeta((k+1)r)}\right)}$. After some work, we define constants $\eta_k$ which satisfy the following: If $k\in\mathbb{N}$ and $r>1$, then the range of the function $\sigma_{-r,k}$ is dense in $\displaystyle{\left[1,\frac{\zeta(r)}{\zeta((k+1)r)}\right)}$ if and only if $r\leq\eta_k$. We end with an open problem.  
\end{abstract}
 
\section{Introduction} 
Throughout this paper, we will let $\mathbb{N}$ denote the set of positive integers, and we will let $\mathbb{P}$ denote the set of prime numbers. We will also let $p_i$ denote the $i^{th}$ prime number. 
\par 
For a real number $t$, define the function $\sigma_t\colon\mathbb{N}\rightarrow\mathbb{R}$ by $\displaystyle{\sigma_t(n)=\sum_{\substack{d\vert n \\ d>0}}d^t}$ 
for all $n\in\mathbb{N}$. Note that $\sigma_t$ is multiplicative for any real $t$. For each positive integer $n$, if $r>1$, we have $\displaystyle{1\leq\sigma_{-r}(n)=\sum_{\substack{d\vert n \\ d>0}}\frac{1}{d^r}<\sum_{i=1}^{\infty}\frac{1}{i^r}}=\zeta(r)$, where $\zeta$ denotes the Riemann zeta function. The author has shown \cite{Defant14} that if $r>1$, then the range of the function $\sigma_{-r}$ is dense in the interval $[1,\zeta(r))$ if and only if $r\leq\eta$, where $\eta$ is the unique number in the interval $(1,2]$ that satisfies the equation $\displaystyle{\left(\frac{2^{\eta}}{2^{\eta}-1}\right)\left(\frac{3^{\eta}+1}{3^{\eta}-1}\right)=\zeta(\eta)}$.  
\par 
For each positive integer $k$, let $S_k$ be the set of positive integers defined by 
\[S_k=\{n\in\mathbb{N}:p^{k+1}\nmid n\hspace{1 mm}\forall\hspace{1 mm} p\in\mathbb{P}\}.\]
For any real number $t$ and positive integer $k$, let $\sigma_{t,k}\colon S_k\rightarrow\mathbb{R}$ be the restriction of the function $\sigma_t$ to the set $S_k$, and let $\log\sigma_{t,k}=\log\circ\hspace{0.75 mm}\sigma_{t,k}$. We observe that, for any $k\in\mathbb{N}$ and $r>1$, the range of $\sigma_{-r,k}$ is a subset of $\displaystyle{\left[1,\frac{\zeta(r)}{\zeta((k+1)r)}\right)}$. This is because, 
if we allow $\displaystyle{\prod_{i=1}^v q_i^{\beta_i}}$ to be the canonical prime factorization of some positive integer in $S_k$ (meaning that $\beta_i\leq k$ for all $i\in\{1,2,\ldots,v\}$), then 
\[1=\sigma_{-r,k}(1)\leq\sigma_{-r,k}\left(\prod_{i=1}^v q_i^{\beta_i}\right)=\prod_{i=1}^v \sigma_{-r,k}(q_i^{\beta_i})=\prod_{i=1}^v \left(\sum_{j=0}^{\beta_i}q_i^{-jr}\right)\] 
\[\leq\prod_{i=1}^v \left(\sum_{j=0}^k q_i^{-jr}\right)<\prod_{i=1}^{\infty}\left(\sum_{j=0}^k p_i^{-jr}\right)=\prod_{i=1}^{\infty}\frac{1-p_i^{-(k+1)r}}{1-p_i^{-r}}=\frac{\zeta(r)}{\zeta((k+1)r)}.\] 
To simplify notation, we will write $\displaystyle{G_k(r)=\frac{\zeta(r)}{\zeta((k+1)r)}}$. 
\par 
Our goal is to analyze the ranges of the functions $\sigma_{-r,k}$ in order to find constants analogous to $\eta$ for each positive integer $k$. More formally, for each $k\in\mathbb{N}$, we will find a constant $\eta_k$ such that if $r>1$, then the range of $\sigma_{-r,k}$ is dense in $[1,G_k(r))$ if and only if $r\leq\eta_k$. 
 
\section{The Ranges of $\sigma_{-r,k}$}  
\begin{definition} \label{Def2.1} 
For $k,m\in\mathbb{N}$ and $r\in(1,\infty)$, let 
\[f_k(m,r)=\log\left(1+\frac{1}{p_m^r}\right)+\sum_{i=1}^m\log\left(\sum_{j=0}^k\frac{1}{p_i^{jr}}\right).\]
\end{definition}
Notice that, for any $k\in\mathbb{N}$ and $r\in(1,\infty)$, the range of $\sigma_{-r,k}$ is dense in the interval $[1,G_k(r))$ if and only if the range of $\log\sigma_{-r,k}$ is dense in the interval $[0,\log (G_k(r)))$. For this reason, we will henceforth focus on the ranges of the functions $\log\sigma_{-r,k}$ for various values of $k$ and $r$. 
\begin{theorem} \label{Thm2.1} 
Let $k\in\mathbb{N}$, and let $r\in(1,\infty)$. The range of $\log\sigma_{-r,k}$ is dense in the interval $[0,\log(G_k(r)))$ if and only if $f_k(m,r)\leq\log(G_k(r))$ for all $m\in\mathbb{N}$. 
\end{theorem}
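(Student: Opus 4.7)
The plan is to reformulate the hypothesis multiplicatively and then settle the two implications by elementary ``digit-by-digit'' arguments. Writing $c_i^{(a)}=\log\bigl(\sum_{j=0}^{a}p_i^{-jr}\bigr)$ for $0\le a\le k$ and $T_m=\log(G_k(r))-\sum_{i=1}^{m}c_i^{(k)}=\sum_{i>m}c_i^{(k)}$, a glance at Definition~\ref{Def2.1} shows that $f_k(m,r)\le\log(G_k(r))$ is equivalent to $c_m^{(1)}=\log(1+p_m^{-r})\le T_m$. Since $\sigma_{-r,k}$ is multiplicative and every $n\in S_k$ has prime-power exponents in $\{0,1,\ldots,k\}$, the range of $\log\sigma_{-r,k}$ is exactly the set of finitely supported sums $\sum_i c_i^{(a_i)}$ with $a_i\in\{0,1,\ldots,k\}$. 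A quick computation gives $c_i^{(a+1)}-c_i^{(a)}=\log\!\left(1+\frac{p_i^{-(a+1)r}}{\sum_{j=0}^{a}p_i^{-jr}}\right)$, which is decreasing in $a$, so the largest gap in the local ``alphabet'' $\{c_i^{(0)},\ldots,c_i^{(k)}\}$ is $c_i^{(1)}$.

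For necessity, suppose $c_m^{(1)}>T_m$ for some $m$, and pick any element $v=\sum_i c_i^{(a_i)}<c_m^{(1)}$ of the range. Since $i\mapsto c_i^{(1)}$ is decreasing in $i$, we have $c_i^{(1)}\ge c_m^{(1)}>v$ for every $i\le m$, forcing $a_i=0$ there and giving $v\le\sum_{i>m}c_i^{(k)}=T_m$. Thus the open interval $(T_m,c_m^{(1)})$ is disjoint from the range, and because $c_m^{(1)}<c_m^{(k)}\le\log(G_k(r))$ it lies inside $[0,\log(G_k(r)))$, so density fails.

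For sufficiency, assume $c_m^{(1)}\le T_m$ for every $m$, fix $T\in[0,\log(G_k(r)))$ and $\varepsilon>0$, and run the greedy algorithm that at step $i$ chooses $a_i$ to be the largest element of $\{0,1,\ldots,k\}$ with $c_i^{(a_i)}\le r_i:=T-\sum_{j<i}c_j^{(a_j)}$. I will prove the invariant $0\le r_i\le T_{i-1}$ by induction: if $a_i=k$ then $r_{i+1}=r_i-c_i^{(k)}\le T_{i-1}-c_i^{(k)}=T_i$, whereas if $a_i<k$ then $r_{i+1}<c_i^{(a_i+1)}-c_i^{(a_i)}\le c_i^{(1)}\le T_i$, where the final inequality is the hypothesis. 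Because $T_{i-1}\to 0$, the residuals $r_i$ vanish in the limit; truncating at any $N$ with $T_N<\varepsilon$ yields $n=\prod_{i=1}^{N}p_i^{a_i}\in S_k$ with $|\log\sigma_{-r,k}(n)-T|=r_{N+1}\le T_N<\varepsilon$.

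The main obstacle is the inductive step $r_{i+1}\le c_i^{(1)}\le T_i$ in the case $a_i<k$: it couples the monotonicity of the consecutive-gap function $a\mapsto c_i^{(a+1)}-c_i^{(a)}$ with the hypothesis at the current index, and without both ingredients the induction collapses. Once that invariant is in place, the rest of the argument (truncation in sufficiency, and identification of the gap in necessity) is routine bookkeeping.
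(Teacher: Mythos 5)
Your proposal is correct, and its overall architecture mirrors the paper's: the necessity direction exhibits the gap $\bigl(\sum_{i>m}\log(\sum_{j=0}^{k}p_i^{-jr}),\,\log(1+p_m^{-r})\bigr)$ by splitting $N\in S_k$ according to whether some prime $p_i$ with $i\le m$ divides it, and the sufficiency direction runs the same greedy digit-by-digit construction (your $a_i$ is the paper's $\alpha_i$, your residual $r_{i+1}$ is the paper's $x-C_i$). Where you genuinely depart from the paper is in how you close the sufficiency argument. The paper proves $\lim C_l=x$ by contradiction: assuming the limit falls short, it locates a minimal index $m$ with $E_m>\log(G_k(r))-x$, rules out $\alpha_m>0$ via the ad hoc numerical inequality $\bigl(1+p_m^{-r}\bigr)^2<p_m^r/(p_m^r-1)\Rightarrow 2<1+1/(p_m^r-1)$, and only then contradicts the hypothesis at $m$. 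You instead prove the direct invariant $0\le r_{i+1}\le T_i$, using the observation that the consecutive gap $c_i^{(a+1)}-c_i^{(a)}=\log\bigl(1+p_i^{-(a+1)r}/\sum_{j=0}^{a}p_i^{-jr}\bigr)$ is decreasing in $a$, so that when $a_i<k$ the maximality of $a_i$ gives $r_{i+1}<c_i^{(a_i+1)}-c_i^{(a_i)}\le c_i^{(1)}\le T_i$; convergence then follows at once since $T_i\to 0$. This is cleaner and slightly more informative: it yields the quantitative bound $|\log\sigma_{-r,k}(n)-T|\le T_N$ for the truncation at $N$, and it avoids the paper's case analysis on $\alpha_m$ entirely. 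The only cosmetic omissions are the remarks that $a_i$ is always defined because $c_i^{(0)}=0\le r_i$, and that the gap interval in the necessity direction is nonempty precisely because $c_m^{(1)}>T_m$; both are immediate.
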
 
\begin{proof} 
First, suppose that there exists some $m\in\mathbb{N}$ such that $f_k(m,r)>\log(G_k(r))$. Then 
\[\log\left(1+\frac{1}{p_m^r}\right)+\sum_{i=1}^m\log\left(\sum_{j=0}^k\frac{1}{p_i^{jr}}\right)>\log\left(\prod_{i=1}^{\infty}\left(\sum_{j=0}^k\frac{1}{p_i^{jr}}\right)\right)\] 
\[=\sum_{i=1}^{\infty}\log\left(\sum_{j=0}^k\frac{1}{p_i^{jr}}\right),\] 
which means that 
\[\log\left(1+\frac{1}{p_m^r}\right)>\sum_{i=m+1}^{\infty}\log\left(\sum_{j=0}^k\frac{1}{p_i^{jr}}\right).\]
Fix some $N\in S_k$, and let $\displaystyle{N=\prod_{i=1}^v q_i^{\gamma_i}}$ be the canonical prime factorization of $N$. Note that $\gamma_i\leq k$ for all $i\in\{1,2,\ldots,v\}$ because $N\in S_k$. If $p_s\vert N$ for some $s\in\{1,2,\ldots,m\}$, then 
\[\log\sigma_{-r,k}(N)\geq\log\left(1+\frac{1}{p_s^r}\right)\geq\log\left(1+\frac{1}{p_m^r}\right).\]
On the other hand, if $p_s\nmid N$ for all $s\in\{1,2,\ldots,m\}$, then 
\[\log\sigma_{-r,k}(N)=\log\left(\prod_{i=1}^v \sigma_{-r,k}(q_i^{\gamma_i})\right)=\log\left(\prod_{i=1}^v \left(\sum_{j=0}^{\gamma_i}\frac{1}{q_i^{jr}}\right)\right)\] 
\[\leq\log\left(\prod_{i=1}^v \left(\sum_{j=0}^k\frac{1}{q_i^{jr}}\right)\right)<\log\left(\prod_{i=m+1}^{\infty}\left(\sum_{j=0}^k\frac{1}{p_i^{jr}}\right)\right)\] 
\[=\sum_{i=m+1}^{\infty}\log\left(\sum_{j=0}^k\frac{1}{p_i^{jr}}\right).\] 
Because $N$ was arbitrary, this shows that there is no element of the range of $\log\sigma_{-r,k}$ in the interval $\displaystyle{\left(\sum_{i=m+1}^{\infty}\log\left(\sum_{j=0}^k\frac{1}{p_i^{jr}}\right),\log\left(1+\frac{1}{p_m^r}\right)\right)}$. Therefore, the range of $\log\sigma_{-r,k}$ is not dense in $[0,\log(G_k(r)))$. 
\par  
Conversely, suppose that $f_k(m,r)\leq\log(G_k(r))$ for all $m\in\mathbb{N}$. This is equivalent to the statement that 
\[\log\left(1+\frac{1}{p_m^r}\right)\leq\sum_{i=m+1}^{\infty}\log\left(\sum_{j=0}^k\frac{1}{p_i^{jr}}\right)\] 
for all $m\in\mathbb{N}$. Choose some arbitrary $x\in(0,\log(G_k(r)))$. We will construct a sequence in the following manner. First, let $C_0=0$. Now, for each positive integer $l$, let $C_l=C_{l-1}+\log\displaystyle{\left(\sum_{j=0}^{\alpha_l}\frac{1}{p_l^{jr}}\right)}$, where $\alpha_l$ is the largest nonnegative integer less than or equal to $k$ such that $C_{l-1}+\log\displaystyle{\left(\sum_{j=0}^{\alpha_l}\frac{1}{p_l^{jr}}\right)}\leq x$. Also, for each $l\in\mathbb{N}$, let $D_l=\log\displaystyle{\left(\sum_{j=0}^k\frac{1}{p_l^{jr}}\right)-\log\left(\sum_{j=0}^{\alpha_l}\frac{1}{p_l^{jr}}\right)}$, and let $E_l=\displaystyle{\sum_{i=1}^l D_i}$. Note that 
\[\lim_{l\rightarrow\infty}(C_l+E_l)=\lim_{l\rightarrow\infty}\left(\sum_{i=1}^l\log\left(\sum_{j=0}^{\alpha_i}\frac{1}{p_i^{jr}}\right)+\sum_{i=1}^lD_i\right)\]
\[=\lim_{l\rightarrow\infty}\sum_{i=1}^l\log\left(\sum_{j=0}^k\frac{1}{p_i^{jr}}\right)=\log(G_k(r)).\] 
\par 
Now, because the sequence $(C_l)_{l=1}^{\infty}$ is bounded and monotonic, we know that there exists some real number $\gamma$ such that $\displaystyle{\lim_{l\rightarrow\infty}}C_l=\gamma$. Note that, for each $l\in\mathbb{N}$, $C_l$ is in the range of $\log\sigma_{-r,k}$ because 
\[C_l=\sum_{i=1}^l\log\left(\sum_{j=0}^{\alpha_i}\frac{1}{p_i^{jr}}\right)=\log\left(\prod_{i=1}^l\sigma_{-r}(p_i^{\alpha_i})\right)=\log\sigma_{-r,k}\left(\prod_{i=1}^l p_i^{\alpha_i}\right).\] 
Therefore, if we can show that $\gamma=x$, then we will know (because we chose $x$ arbitrarily) that the range of $\log\sigma_{-r,k}$ is dense in $[0,\log(G_k(r)))$, which will complete the proof. 
\par 
Because we defined the sequence $(C_l)_{l=1}^{\infty}$ so that $C_l\leq x$ for all $l\in\mathbb{N}$, we know that $\gamma\leq x$. Now, suppose $\gamma<x$. Then $\displaystyle{\lim_{l\rightarrow\infty}}E_l=\log(G_k(r))-\gamma>\log(G_k(r))-x$. This implies that there exists some positive integer $L$ such that $E_l>\log(G_k(r))-x$ for all $l\geq L$. Let $m$ be the smallest positive integer that satisfies $E_m>\log(G_k(r))-x$. First, suppose $D_m\leq x-C_m$ so that $x\geq C_m+D_m=C_{m-1}+\log\displaystyle{\left(\sum_{j=0}^k\frac{1}{p_m^{jr}}\right)}$. This implies, by the definition of $\alpha_m$, that $\alpha_m=k$. Then $D_m=0$. If $m>1$, then 
$E_{m-1}=E_m>\log(G_k(r))-x$, which contradicts the minimality of $m$. On the other hand, if $m=1$, then we have $0=D_m=E_m>\log(G_k(r))-x$, which is also a contradiction. Thus, we conclude that $D_m>x-C_m$. Furthermore, 
\[\sum_{i=m+1}^{\infty}\log\left(\sum_{j=0}^k\frac{1}{p_i^{jr}}\right)=\log(G_k(r))-\sum_{i=1}^m\log\left(\sum_{j=0}^k\frac{1}{p_i^{jr}}\right)\] 
\begin{equation} \label{Eq2.1}
=\log(G_k(r))-E_m-C_m<x-C_m<D_m,
\end{equation} 
and we originally assumed that $\displaystyle{\log\left(1+\frac{1}{p_m^r}\right)}\leq\sum_{i=m+1}^{\infty}\log\left(\sum_{j=0}^k\frac{1}{p_i^{jr}}\right)$.     
This means that $\displaystyle{\log\left(1+\frac{1}{p_m^r}\right)<D_m=\log\left(\sum_{j=0}^k\frac{1}{p_m^{jr}}\right)-\log\left(\sum_{j=0}^{\alpha_m}\frac{1}{p_m^{jr}}\right)}$, or, \\ equivalently, $\displaystyle{\log\left(1+\frac{1}{p_m^r}\right)+\log\left(\sum_{j=0}^{\alpha_m}\frac{1}{p_m^{jr}}\right)<\log\left(\sum_{j=0}^k\frac{1}{p_m^{jr}}\right)}$. If $\alpha_m>0$, we have 
\[\log\left(\left(1+\frac{1}{p_m^r}\right)^2\right)\leq\log\left(1+\frac{1}{p_m^r}\right)+\log\left(\sum_{j=0}^{\alpha_m}\frac{1}{p_m^{jr}}\right)<\log\left(\sum_{j=0}^k\frac{1}{p_m^{jr}}\right)\] 
\[<\log\left(\sum_{j=0}^{\infty}\frac{1}{p_m^{jr}}\right)=\log\left(\frac{p_m^r}{p_m^r-1}\right),\] 
so $\displaystyle{\left(1+\frac{1}{p_m^r}\right)^2<\frac{p_m^r}{p_m^r-1}}$. We may write this as $\displaystyle{1+\frac{2}{p_m^r}+\frac{1}{p_m^{2r}}<1+\frac{1}
{p_m^r-1}}$, so $\displaystyle{2<\frac{p_m^r}{p_m^r-1}=1+\frac{1}{p_m^r-1}}$. As $p_m^r>2$, this is a contradiction. 
Hence, $\alpha_m=0$. By the definitions of $\alpha_m$ and $C_m$, we see that $\displaystyle{C_{m-1}+\log\left(1+\frac{1}{p_m^r}\right)}$ $>x$ and that $C_m=C_{m-1}$. Therefore, $\displaystyle{\log\left(1+\frac{1}{p_m^r}\right)>x-C_{m-1}=x-C_m}$. \\ 
However, recalling from \eqref{Eq2.1} that $\displaystyle{\sum_{i=m+1}^{\infty}\log\left(\sum_{j=0}^k\frac{1}{p_i^{jr}}\right)<x-C_m}$, we find that $\displaystyle{\sum_{i=m+1}^{\infty}\log\left(\sum_{j=0}^k\frac{1}{p_i^{jr}}\right)<\log\left(1+\frac{1}{p_m^r}\right)}$, which we originally assumed was false. Therefore, $\gamma=x$, so the proof is complete.      
\end{proof}  
Given some positive integer $k$, we may use Theorem \ref{Thm2.1} to find the values of $r>1$ such that the range of $\log\sigma_{-r,k}$ is dense in $[0,\log(G_k(r)))$. To do so, we only need to find the values of $r>1$ such that $f_k(m,r)\leq\log(G_k(r))$ for all $m\in\mathbb{N}$. However, this is still a somewhat difficult problem. Luckily, we can make the problem much simpler with the use of the following theorem. We first need a quick lemma.
\begin{lemma} \label{Lem2.1} 
If $j\in\mathbb{N}\backslash\{1,2,4\}$, then $\displaystyle{\frac{p_{j+1}}{p_j}<\sqrt{2}}$. 
\end{lemma}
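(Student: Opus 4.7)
The strategy is to combine a quantitative strengthening of Bertrand's postulate with a finite check of small cases. Bertrand's postulate itself only yields $p_{j+1}/p_j < 2$, which is too weak, so I would instead invoke Nagura's theorem: for every integer $n \geq 25$, the interval $[n, 6n/5]$ contains a prime. Applying this with $n = p_j$ whenever $p_j \geq 25$ gives $p_{j+1} \leq \frac{6}{5}p_j$, and since $\frac{6}{5} = 1.2 < 1.4142 < \sqrt{2}$, this immediately produces the desired inequality $p_{j+1}/p_j < \sqrt{2}$.

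The primes strictly less than $25$ are exactly $p_1, \ldots, p_9 \in \{2, 3, 5, 7, 11, 13, 17, 19, 23\}$, and $p_{10} = 29 \geq 25$. Hence Nagura's theorem covers every $j \geq 10$. To finish, I would verify the finitely many remaining cases $j \in \{3, 5, 6, 7, 8, 9\}$ by direct calculation: one checks
\[\frac{7}{5},\ \frac{13}{11},\ \frac{17}{13},\ \frac{19}{17},\ \frac{23}{19},\ \frac{29}{23}\ <\ \sqrt{2},\]
each of which is an elementary arithmetic comparison (it suffices to square both sides and compare integers). Together with Nagura's theorem, this proves the lemma.

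The only real obstacle is the need for a quantitative version of Bertrand's postulate; once Nagura's theorem is cited, everything reduces to six numerical inequalities. It is also worth noting that the excluded indices $j \in \{1, 2, 4\}$ correspond precisely to the ratios $\frac{3}{2}, \frac{5}{3}, \frac{11}{7}$, each of which exceeds $\sqrt{2}$, so the excluded set in the lemma's hypothesis is sharp.
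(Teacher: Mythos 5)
Your proof is correct, but it takes a genuinely different (and lighter) route than the paper. The paper invokes Dusart's estimate that for $x\geq 396\,738$ the interval $\left[x,x+\frac{x}{25\log^2 x}\right]$ contains a prime, which forces a computer search (Mathematica) through all primes below $396\,738$ to handle the remaining cases. You instead invoke Nagura's theorem, a far weaker prime-gap bound that already kicks in at $n\geq 25$; this shrinks the finite verification to six ratios that can be checked by hand, making the whole argument elementary and computer-free. The only point to tighten: as you state Nagura's theorem, the closed interval $[n,6n/5]$ with $n=p_j$ already contains the prime $p_j$ itself, so the conclusion $p_{j+1}\leq\frac{6}{5}p_j$ does not literally follow. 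Quote the theorem in its standard form (for $n\geq 25$ there is a prime $p$ with $n<p<\frac{6}{5}n$), or apply it at $n=p_j+1$ and note that $\frac{6}{5}(p_j+1)<\sqrt{2}\,p_j$ once $p_j\geq 7$. With that adjustment the proof is complete, and your observation that the excluded indices $j\in\{1,2,4\}$ are exactly the ones where the ratio exceeds $\sqrt{2}$ is a nice confirmation that the lemma's hypothesis is sharp.
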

\begin{proof}
Pierre Dusart \cite{Dusart10} has shown that, for $x\geq 396\hspace{0.75 mm} 738$, there must be at  least one prime in the interval $\displaystyle{\left[x, x+\frac{x}{25\log^2x}\right]}$. Therefore, whenever $p_j>396\hspace{0.75 mm} 738$, we may set $x=p_j+1$ to get $\displaystyle{p_{j+1}\leq (p_j+1)+\frac{p_j+1}{25\log^2(p_j+1)}}$ $<\sqrt{2}p_j$. Using Mathematica 9.0 \cite{wolfram09}, we may quickly search through all the primes less than $396\hspace{0.75 mm} 738$ to conclude the desired result.   
\end{proof} 
\begin{remark} \label{Rem2.1}
There is an identical statement and proof of Lemma \ref{Lem2.1} in \cite{Defant14}, but we include it again here for the sake of completeness (and so that we may later refer to Lemma \ref{Lem2.1} with a name). 
\end{remark}
\begin{theorem} \label{Thm2.2} 
Let $k\!\in\!\mathbb{N}$, and let $r\!\in\!(1,2]$. The range of the function $\log\sigma_{-r,k}$ is dense in the interval $[0,\log(G_k(r)))$ if and only if \\ $f_k(m,r)\leq\log(G_k(r))$ for all $m\in\{1,2,4\}$. 
\end{theorem}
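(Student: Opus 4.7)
The forward implication is immediate from Theorem~\ref{Thm2.1}, so the task is to show that, when $r\in(1,2]$, the bound $f_k(m,r)\leq\log(G_k(r))$ at $m\in\{1,2,4\}$ forces the same bound for every $m\in\mathbb{N}$. My plan is to use Lemma~\ref{Lem2.1} to establish a monotonicity property of the sequence $(f_k(m,r))_{m\in\mathbb{N}}$ that disposes of every $m\notin\{1,2,4\}$, and then to invoke convergence as $m\to\infty$.

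The key intermediate claim I would prove is: if $k\in\mathbb{N}$, $r\in(1,2]$, and $p_{m+1}/p_m<\sqrt{2}$, then $f_k(m+1,r)>f_k(m,r)$. Unwinding Definition~\ref{Def2.1},
\[
f_k(m+1,r)-f_k(m,r)=\log\!\left(\left(1+\tfrac{1}{p_{m+1}^r}\right)\sum_{j=0}^k\tfrac{1}{p_{m+1}^{jr}}\right)-\log\!\left(1+\tfrac{1}{p_m^r}\right).
\]
Because $k\geq 1$, the factor $\sum_{j=0}^k p_{m+1}^{-jr}$ is at least $1+p_{m+1}^{-r}$, so the argument of the first logarithm is at least $(1+p_{m+1}^{-r})^2>1+2p_{m+1}^{-r}$. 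The hypothesis $p_{m+1}/p_m<\sqrt{2}\leq 2^{1/r}$ (using $r\leq 2$) gives $p_{m+1}^r<2p_m^r$, hence $2p_{m+1}^{-r}>p_m^{-r}$ and therefore $1+2p_{m+1}^{-r}>1+p_m^{-r}$, which produces the desired positivity.

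By Lemma~\ref{Lem2.1} the hypothesis $p_{m+1}/p_m<\sqrt{2}$ holds for every $m\in\mathbb{N}\setminus\{1,2,4\}$. Applied at $m=3$, the intermediate claim yields $f_k(3,r)<f_k(4,r)\leq\log(G_k(r))$, which disposes of $m=3$. Applied for every $m\geq 5$, it shows that $(f_k(m,r))_{m\geq 5}$ is strictly increasing; and since $\log(1+p_m^{-r})\to 0$ while $\sum_{i=1}^\infty\log(\sum_{j=0}^k p_i^{-jr})=\log(G_k(r))$, the sequence converges to $\log(G_k(r))$, so every one of its terms lies strictly below this limit. Together with the hypotheses at $m\in\{1,2,4\}$, this gives $f_k(m,r)\leq\log(G_k(r))$ for every $m\in\mathbb{N}$, and Theorem~\ref{Thm2.1} produces density. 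The only delicate step is the monotonicity claim; what makes it work is that the elementary inequality $(1+x)^2>1+2x$ for $x>0$ combined with $\sqrt{2}\leq 2^{1/r}$ for $r\leq 2$ lets Lemma~\ref{Lem2.1} be applied exactly where it is needed.
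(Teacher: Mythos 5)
Your proposal is correct and follows essentially the same route as the paper: the identical monotonicity claim $f_k(m+1,r)>f_k(m,r)$ for $m\notin\{1,2,4\}$, proved via $\sum_{j=0}^k p_{m+1}^{-jr}\geq 1+p_{m+1}^{-r}$, the inequality $(1+x)^2>1+2x$, and Lemma~\ref{Lem2.1} with $\sqrt{2}\leq 2^{1/r}$, followed by the same limiting argument for $m\geq 5$.
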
 
\begin{proof} 
In light of Theorem \ref{Thm2.1}, we simply need to show that if \\ 
$f_k(m,r)\leq\log(G_k(r))$ for all $m\in\{1,2,4\}$, then $f_k(m,r)\leq\log(G_k(r))$ for all $m\in\mathbb{N}$. Thus, let us assume that $k$ and $r$ are such that $f_k(m,r)\leq\log(G_k(r))$ for all $m\in\{1,2,4\}$. 
\par 
Now, if $m\in\mathbb{N}\backslash\{1,2,4\}$, then, by Lemma \ref{Lem2.1}, $\displaystyle{\frac{p_{m+1}}{p_m}<\sqrt{2}\leq\sqrt[r]{2}}$, which implies that $\displaystyle{\frac{2}{p_{m+1}^r}>\frac{1}{p_m^r}}$. We then have \[f_k(m+1,r)=\log\left(1+\frac{1}{p_{m+1}^r}\right)+\sum_{i=1}^{m+1}\log\left(\sum_{j=0}^k\frac{1}{p_i^{jr}}\right)\] 
\[\geq 2\log\left(1+\frac{1}{p_{m+1}^r}\right)+\sum_{i=1}^m\log\left(\sum_{j=0}^k\frac{1}{p_i^{jr}}\right)\] 
\[>\log\left(1+\frac{2}{p_{m+1}^r}\right)+\sum_{i=1}^m\log\left(\sum_{j=0}^k\frac{1}{p_i^{jr}}\right)\]
\[>\log\left(1+\frac{1}{p_m^r}\right)+\sum_{i=1}^m\log\left(\sum_{j=0}^k\frac{1}{p_i^{jr}}\right)=f_k(m,r).\]
This means that $f_k(3,r)<f_k(4,r)\leq\log(G_k(r))$. Furthermore, $f_k(m,r)<\log(G_k(r))$ for all $m\geq 5$ because $\displaystyle{(f_k(m,r))_{m=5}^{\infty}}$
is a strictly increasing sequence and $\displaystyle{\lim_{m\rightarrow\infty}f_k(m,r)=\log(G_k(r))}$.  
\end{proof}  
We now have a somewhat simple way to check whether or not the range of $\log\sigma_{-r,k}$ is dense in $[0,\log(G_k(r)))$ for given $k\in\mathbb{N}$ and $r\in(1,2]$, but we can do better. In what follows, we will let $T_k(m,r)=f_k(m,r)-\log(G_k(r))$. 
\begin{lemma} \label{Lem2.2} 
For fixed $k\in\mathbb{N}$ and $m\in\{1,2,4\}$, $T_k(m,r)$ is a strictly increasing function in the variable $r$ for all $\displaystyle{r\in\left(1,\frac{7}{3}\right)}$. 
\end{lemma}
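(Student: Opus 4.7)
The plan is to show that $\frac{\partial T_k}{\partial r}(m,r)>0$ throughout $(1,7/3)$ for each $m\in\{1,2,4\}$ and each $k\in\mathbb{N}$. I would begin by differentiating $T_k(m,r)=f_k(m,r)-\log G_k(r)$ term-by-term. Using the Euler-product expansion $\log G_k(r)=\sum_{i=1}^{\infty}\log\sum_{j=0}^k p_i^{-jr}$ to cancel against the finite sum inside $f_k$, one obtains
\[
\frac{\partial T_k}{\partial r}(m,r)=-\frac{\log p_m}{p_m^r+1}+\sum_{i=m+1}^{\infty}(\log p_i)\,R_k(p_i,r),
\]
where $R_k(p,r):=\frac{\sum_{j=1}^k jp^{-jr}}{\sum_{j=0}^k p^{-jr}}$.

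The next step is a clean reduction to the case $k=1$. Putting $x=p^{-r}$ and expanding,
\[
(1+x)\sum_{j=1}^k jx^j-x\sum_{j=0}^k x^j = 2\sum_{j=2}^k(j-1)x^j+(k-1)x^{k+1}\geq 0,
\]
which shows $R_k(p,r)\geq \frac{1}{p^r+1}$, with equality exactly when $k=1$. This lower bound does not depend on $k$, so the problem reduces to the single inequality
\[
H(m,r):=\sum_{i=m+1}^{\infty}\frac{\log p_i}{p_i^r+1}-\frac{\log p_m}{p_m^r+1}>0\quad\text{for } m\in\{1,2,4\},\; r\in(1,7/3).
\]

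I would prove $H(m,r)>0$ uniformly on this interval by first showing that $H(m,\cdot)$ is monotonically decreasing on $(1,7/3)$ --- differentiating once more, this reduces to the tail estimate $\sum_{i>m}(\log p_i)^2 p_i^r/(p_i^r+1)^2 > (\log p_m)^2 p_m^r/(p_m^r+1)^2$, which can be verified by retaining enough initial terms and using Dusart-style density bounds (as in Lemma \ref{Lem2.1}) to handle the rest --- and then checking the boundary value $H(m,7/3)>0$ numerically by truncating the defining sum after a sufficiently large prime $p_M$ and bounding the residual tail by an integral comparison. The main obstacle is this boundary check at $r=7/3$, where the inequality is tight (most severely for $m=4$, where the excess is on the order of $10^{-2}$); the reduction to the $k=1$ case is essential because it replaces infinitely many separate monotonicity statements (one for each $k$) by a single quantitative estimate whose verification, while delicate, is finite.
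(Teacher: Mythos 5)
Your proposal follows essentially the same route as the paper: differentiate $T_k(m,r)$, bound the ratio $\displaystyle{\frac{\sum_{a=1}^k ap^{-ar}}{\sum_{b=0}^k p^{-br}}}$ below by $\displaystyle{\frac{1}{p^r+1}}$ to eliminate the dependence on $k$, and then establish the resulting $k$-free inequality $\displaystyle{\sum_{i=m+1}^{\infty}\frac{\log p_i}{p_i^r+1}>\frac{\log p_m}{p_m^r+1}}$ by monotonicity in $r$ together with a check at the endpoint $r=\frac{7}{3}$. The only divergence is implementational: the paper first truncates the tail to the six terms $i=m+1,\ldots,m+6$, which makes both the monotonicity claim and the endpoint check finite computations, whereas you retain the infinite sum and control the tails with integral and Dusart-type estimates; your explicit polynomial identity justifying the ratio bound is a nice addition that the paper leaves as a bare observation.
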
 
\begin{proof} 
$\displaystyle{T_k(m,r)=\log\left(1+\frac{1}{p_m^r}\right)-\sum_{i=m+1}^{\infty}\log\left(\sum_{j=0}^k\frac{1}{p_i^{jr}}\right)}$, so, for fixed \\ 
$k\in\mathbb{N}$ and $m\in\{1,2,4\}$, we have \[\frac{d}{dr}T_k(m,r)=\sum_{i=m+1}^{\infty}\left(\left(\frac{\sum_{a=1}^k ap_i^{-ar}}{\sum_{b=0}^k p_i^{-br}}\right)\log p_i\right)-\frac{\log p_m}{p_m^r+1}.\] 
Observe that, for any $p_i\in\mathbb{P}$, $k\in\mathbb{N}$, and $\displaystyle{r\in\left(1,\frac{7}{3}\right)}$, we have \\ 
$\displaystyle{\frac{\sum_{a=1}^k ap_i^{-ar}}{\sum_{b=0}^k p_i^{-br}}\geq\frac{p_i^{-r}}{1+p_i^{-r}}=\frac{1}{p_i^r+1}}$. Therefore, in order to show that \\ 
$\displaystyle{\frac{d}{dr}T_k(m,r)>0}$, it suffices to show that $\displaystyle{\sum_{i=m+1}^{\infty}\frac{\log p_i}{p_i^r+1}}>\frac{\log p_m}{p_m^r+1}$. 
\par 
For each $m\in\{1,2,4\}$, define the function $\displaystyle{J_m\colon\left(1,\frac{7}{3}\right]\rightarrow\mathbb{R}}$ by 
\[J_m(x)=\frac{\log p_m}{p_m^x+1}-\sum_{i=m+1}^{m+6}\frac{\log p_i}{p_i^x+1}.\] 
One may verify, for each $m\in\{1,2,4\}$, that the function $J_m$ is increasing on the interval $\displaystyle{\left(1,\frac{7}{3}\right)}$ and that $\displaystyle{J_m\left(\frac{7}{3}\right)<0}$. Thus, for $m\in\{1,2,4\}$, $\displaystyle{\frac{\log p_m}{p_m^r+1}<\sum_{i=m+1}^{m+6}\frac{\log p_i}{p_i^r+1}<\sum_{i=m+1}^{\infty}\frac{\log p_i}{p_i^r+1}}$. This completes the proof.  
\end{proof} 
\begin{lemma} \label{Lem2.3}
For each positive integer $k$, the functions $T_k(1,r)$ and $T_k(2,r)$ each have precisely one root for $r\in (1,2]$. 
\end{lemma}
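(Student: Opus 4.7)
The plan is to combine the monotonicity result of Lemma \ref{Lem2.2} with an endpoint analysis and the intermediate value theorem. Since Lemma \ref{Lem2.2} tells us that $T_k(m,r)$ is strictly increasing in $r$ on $(1, 2] \subset (1, 7/3)$ for $m \in \{1,2,4\}$, in particular for $m \in \{1, 2\}$, uniqueness of a root in $(1, 2]$ is immediate. So the task reduces to showing that $T_k(m,r)$ actually changes sign on $(1,2]$.

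For the left endpoint I would argue $T_k(m, r) \to -\infty$ as $r \to 1^+$. This is because $\log G_k(r) = \log \zeta(r) - \log \zeta((k+1)r)$ blows up to $+\infty$ (since $\zeta(r) \to \infty$ while $\zeta((k+1)r) \to \zeta(k+1)$ is a finite positive constant), whereas $f_k(m,r)$ is a finite sum of terms each continuous at $r=1$ and so remains bounded near $r = 1$.

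The substantive step is showing $T_k(m, 2) > 0$ for every $k \in \mathbb{N}$ and $m \in \{1, 2\}$. Writing
\[
T_k(m, 2) = \log\!\left(1 + \frac{1}{p_m^2}\right) - \sum_{i=m+1}^{\infty} \log\!\left(\sum_{j=0}^{k} \frac{1}{p_i^{2j}}\right),
\]
one notes that the inner sum $\sum_{j=0}^k p_i^{-2j}$ is increasing in $k$, so $T_k(m, 2)$ is decreasing in $k$. It therefore suffices to show that $\lim_{k\to\infty} T_k(m, 2) > 0$. Using the Euler product $\zeta(2) = \prod_{i=1}^{\infty}(1-p_i^{-2})^{-1}$, this limit equals
\[
\log\!\left(1 + \frac{1}{p_m^2}\right) - \log\!\left(\prod_{i=m+1}^{\infty}\frac{1}{1-p_i^{-2}}\right) = \log\!\left(\frac{1 + p_m^{-2}}{\zeta(2)\prod_{i=1}^{m}(1-p_i^{-2})}\right).
\]
A direct substitution shows that for both $m=1$ and $m=2$ the argument of the outer logarithm collapses to $10/\pi^2$, so the limit equals $\log(10/\pi^2) > 0$ precisely because $\pi^2 < 10$. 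Combined with continuity of $T_k(m,\cdot)$ on $(1,2]$ and the sign change from $-\infty$ to a positive value at $r=2$, the intermediate value theorem produces a root, which by monotonicity is unique.

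There is no serious obstacle here: the only numerically tight input is the well-known inequality $\pi^2 < 10$, and the clean point that makes the argument uniform across $k$ is the monotonicity of $T_k(m,2)$ in $k$ together with the pleasant coincidence that its limit equals $\log(10/\pi^2)$ for both $m=1$ and $m=2$.
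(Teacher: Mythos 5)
Your proposal is correct and follows essentially the same route as the paper: monotonicity from Lemma \ref{Lem2.2} for uniqueness, the blow-up of $\log G_k(r)$ as $r\to 1^+$ for the negative endpoint, and the computation that both $T_k(1,2)$ and $T_k(2,2)$ exceed $\log(10/\pi^2)>0$. Your phrasing via ``$T_k(m,2)$ decreases in $k$, so pass to the $k\to\infty$ limit'' is just a repackaging of the paper's bound $\sum_{j=0}^k p_i^{-2j}<\sum_{j=0}^\infty p_i^{-2j}$, so there is no substantive difference.
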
 
\begin{proof} 
Fix some $k\in\mathbb{N}$. First,  observe that $\displaystyle{\lim_{r\rightarrow 1^+}T_k(1,r)=-\infty}$ and \\ 
$\displaystyle{\lim_{r\rightarrow 1^+}T_k(2,r)=-\infty}$. Also, when viewed as single-variable functions of $r$, $T_k(1,r)$ and $T_k(2,r)$ are continuous over the interval $(1,2]$. Therefore, if we invoke Lemma \ref{Lem2.2} and the Intermediate Value Theorem, we see that it is sufficient to show that $T_k(1,2)$ and $T_k(2,2)$ are positive. We have 
\[T_k(1,2)=\log\left(1+\frac{1}{2^2}\right)-\sum_{i=2}^{\infty}\log\left(\sum_{j=0}^k\frac{1}{p_i^{2j}}\right)>\log\left(\frac{5}{4}\right)-\sum_{i=2}^{\infty}\log\left(\sum_{j=0}^{\infty}\frac{1}{p_i^{2j}}\right)\] 
\[=\log\left(\frac{5}{4}\right)-\log\left(\prod_{i=2}^{\infty}\frac{p_i^2}{p_i^2-1}\right)=\log\left(\frac{5}{4}\right)+\log\left(\frac{4}{3}\right)-\log(\zeta(2))\] 
\[=\log\left(\frac{10}{\pi^2}\right)>0\] 
and 
\[T_k(2,2)=\log\left(1+\frac{1}{3^2}\right)-\sum_{i=3}^{\infty}\log\left(\sum_{j=0}^k\frac{1}{p_i^{2j}}\right)>\log\left(\frac{10}{9}\right)-\sum_{i=3}^{\infty}\log\left(\sum_{j=0}^{\infty}\frac{1}{p_i^{2j}}\right)\] 
\[=\log\left(\frac{10}{9}\right)-\log\left(\prod_{i=3}^{\infty}\frac{p_i^2}{p_i^2-1}\right)=\log\left(\frac{10}{9}\right)+\log\left(\frac{9}{8}\right)+\log\left(\frac{4}{3}\right)-\log(\zeta(2))\] 
\[=\log\left(\frac{10}{\pi^2}\right)>0.\] 
\end{proof} 
\begin{definition} \label{Def2.2}
For $k\in\mathbb{N}$ and $m\in\{1,2,4\}$, we define $R_k(m)$ by 
\[R_k(m)=\begin{cases} r_0, & \mbox{if } T_k(m,r_0)=0 \mbox{ and } 1<r_0<2; \\ 2, & \mbox{if } T_k(m,r)<0 \mbox{ for all } r\in(1,2). \end{cases}\]
Also, for each positive integer $k$, let $M_k$ be the smallest element $m$ of $\{1,2,4\}$ that satisfies $R_k(m)=\min(R_k(1),R_k(2),R_k(4))$. 
\end{definition} 
\begin{remark} \label{Rem2.2} 
Observe that, for each $k\in\mathbb{N}$, Lemma \ref{Lem2.2}, when combined with the fact that $\displaystyle{\lim_{r\rightarrow 1^+}T_k(m,r)=-\infty}$ for all $m\in\{1,2,4\}$, guarantees that the function $R_k$ is well-defined. Furthermore, note that Lemma \ref{Lem2.3} tells us that $R_k(M_k)<2$. Essentially, $M_k$ is the element $m$ of the set $\{1,2,4\}$ that gives $g(r)=T_k(m,r)$ the smallest root in the interval $(1,2)$, and if multiple values of $m$ give $g(r)$ this minimal root, $M_k$ is simply defined to be the smallest such $m$.   
\end{remark} 
\begin{lemma} \label{Lem2.4}
For all $k\in\mathbb{N}$ and $m\in\{1,2,4\}$, $R_{k+1}(m)\geq R_k(m)$, where equality holds if and only if $m=4$ and $R_k(m)=2$. 
\end{lemma}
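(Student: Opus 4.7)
The plan is to reduce the claim to tracking how the zero of a strictly increasing function shifts when the function is perturbed downward by increasing $k$. Since Lemma \ref{Lem2.2} already supplies strict monotonicity of $T_k(m,r)$ in $r$ on $(1,7/3)\supset(1,2]$, what is really needed is a pointwise comparison between $T_k(m,r)$ and $T_{k+1}(m,r)$ at a common value of $r$.

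The first step is to rewrite $T_k(m,r)$ in the form already used in the proof of Lemma \ref{Lem2.2},
\[T_k(m,r)=\log\!\left(1+\frac{1}{p_m^r}\right)-\sum_{i=m+1}^{\infty}\log\!\left(\sum_{j=0}^k \frac{1}{p_i^{jr}}\right),\]
which comes from expanding $\log G_k(r)$ via the Euler product. All $k$-dependence then sits inside the log factors of the tail sum, and passing from $k$ to $k+1$ simply inserts the positive term $p_i^{-(k+1)r}$ into each inner sum. Hence $T_{k+1}(m,r)<T_k(m,r)$ for every $r>1$ and every $m\in\mathbb{N}$.

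With this pointwise comparison in hand, I would split on whether $R_k(m)<2$ or $R_k(m)=2$. In the first case, $T_k(m,R_k(m))=0$ gives $T_{k+1}(m,R_k(m))<0$; since $T_{k+1}(m,\cdot)$ is strictly increasing on $(1,2]$ by Lemma \ref{Lem2.2}, the unique zero of $T_{k+1}(m,\cdot)$ in $(1,2]$, if it exists, must lie strictly to the right of $R_k(m)$, while if it does not exist then $T_{k+1}(m,r)<0$ throughout $(1,2)$ and $R_{k+1}(m)=2>R_k(m)$. Either subcase yields strict inequality. In the second case, $T_k(m,r)<0$ for all $r\in(1,2)$, so the same holds for $T_{k+1}(m,r)$, forcing $R_{k+1}(m)=2=R_k(m)$.

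To finish the ``iff'' in the equality clause, I would invoke Lemma \ref{Lem2.3}, which guarantees that $T_k(1,\cdot)$ and $T_k(2,\cdot)$ each have a root strictly inside $(1,2)$; therefore $R_k(1),R_k(2)<2$ always, so the second case above can only occur when $m=4$, giving equality exactly when $m=4$ and $R_k(4)=2$. I do not anticipate a serious obstacle: the whole argument is the Euler product identity plus a routine IVT/monotonicity shuffle, and the only mild subtlety is being careful about the boundary case $R_{k+1}(m)=2$ when translating the pointwise inequality $T_{k+1}<T_k$ into the desired inequality on roots.
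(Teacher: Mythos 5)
Your proposal is correct and follows essentially the same route as the paper: the paper's computation with $f_k$ and $\log(G_k(r))$ amounts to exactly your pointwise inequality $T_{k+1}(m,r)<T_k(m,r)$ (the tail $\sum_{i=m+1}^{\infty}\log\bigl(\sum_{j=0}^{k}p_i^{-jr}\bigr)$ strictly increases in $k$), and the subsequent case split on $R_k(m)<2$ versus $R_k(m)=2$, the appeal to the monotonicity of Lemma \ref{Lem2.2}, and the use of Lemma \ref{Lem2.3} to restrict the equality case to $m=4$ all match the paper's argument. Your phrasing of the comparison as an unconditional pointwise inequality is marginally cleaner than the paper's conditional formulation, but it is the same idea.
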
 
\begin{proof} 
Fix $k\in\mathbb{N}$ and $m\in\{1,2,4\}$. Note that if $f_k(m,r)\leq\log(G_k(r))$ for some $r\in(1,2]$, then 
\[f_{k+1}(m,r)-\sum_{i=1}^m\log\left(\sum_{j=0}^{k+1}\frac{1}{p_i^{jr}}\right)=\log\left(1+\frac{1}{p_m^r}\right)\] 
\[=f_{k}(m,r)-\sum_{i=1}^m\log\left(\sum_{j=0}^{k}\frac{1}{p_i^{jr}}\right)\leq\log(G_k(r))-\sum_{i=1}^m\log\left(\sum_{j=0}^{k}\frac{1}{p_i^{jr}}\right)\] 
\[=\sum_{i=m+1}^{\infty}\log\left(\sum_{j=0}^{k}\frac{1}{p_i^{jr}}\right)<\sum_{i=m+1}^{\infty}\log\left(\sum_{j=0}^{k+1}\frac{1}{p_i^{jr}}\right)\] 
\[=\log(G_{k+1}(r))-\sum_{i=1}^m\log\left(\sum_{j=0}^{k+1}\frac{1}{p_i^{jr}}\right),\] 
so $f_{k+1}(m,r)<\log(G_{k+1}(r))$. 
We now consider two cases. 
\vspace{5 mm}
\\ 
Case 1: $T_k(m,r_0)=0$ for some $r_0\!\in\!(1,2)$.  In this case, $R_k(m)=r_0$, so $T_k(m,R_k(m))=0$. Therefore, $f_k(m,R_k(m))=\log(G_k(R_k(m)))$. By the argument made in the preceding paragraph, we conclude that \\ 
$f_{k+1}(m,R_k(m))<\log(G_{k+1}(R_k(m)))$, which is equivalent to the statement $T_{k+1}(m,R_k(m))<0$. Either $R_{k+1}(m)=2>R_k(m)$ or $T_{k+1}(m,R_{k+1}(m))=0>T_{k+1}(m,R_k(m))$. In the latter case, Lemma \ref{Lem2.2} tells us that $R_{k+1}(m)>R_k(m)$.   
\vspace{5 mm}
\\ 
Case 2: $T_k(m,r)<0$ for all $r\in(1,2)$. In this case, $R_k(m)=2$, and $f_k(m,2)\leq\log(G_k(2))$. By the argument made in the beginning of this proof, we conclude that $f_{k+1}(m,2)<\log(G_{k+1}(2))$. Therefore, combining Lemma \ref{Lem2.2} and Definition \ref{Def2.2}, we may conclude that $R_{k+1}(m)=R_k(m)=2$. Note that, by Lemma \ref{Lem2.3}, this case can only occur if $m=4$. 
\end{proof} 
We now mention some numerical results, obtained using Mathematica 9.0, that we will use to prove our final lemma and theorem. 
\par 
Let us define a function $V_k(m,r)$ by \\ 
$\displaystyle{V_k(m,r)=\log\left(1+\frac{1}{p_m^r}\right)-\sum_{i=m+1}^{10^5}\log\left(\sum_{j=0}^k\frac{1}{p_i^{jr}}\right)}$. Then, for fixed $k\in\mathbb{N}$ and $m\in\{1,2,4\}$, we have\[\frac{d}{dr}V_k(m,r)=\sum_{i=m+1}^{10^5}\left(\left(\frac{\sum_{a=1}^k ap_i^{-ar}}{\sum_{b=0}^k p_i^{-br}}\right)\log p_i\right)-\frac{\log p_m}{p_m^r+1}\] 
\[>\sum_{i=m+1}^{m+6}\left(\frac{\log p_i}{p_i^r+1}\right)-\frac{\log p_m}{p_m^r+1}.\] 
Referring to the last two sentences of the proof of Lemma \ref{Lem2.2}, we see that $\displaystyle{\frac{d}{dr}V_k(m,r)>0}$ for $\displaystyle{r\in\left(1,\frac{7}{3}\right)}$ when $k\in\mathbb{N}$ and $m\in\{1,2,4\}$ are fixed. In particular, we will make use of the fact that $V_1(1,r)$ is an increasing function of $r$ on the interval $\displaystyle{\left(1,\frac{7}{3}\right)}$. We may easily verify that $V_1(1,1)<0<V_1\displaystyle{\left(1,\frac{7}{3}\right)}$, so there exists a unique number $\displaystyle{r_1\in\left(1,\frac{7}{3}\right)}$ such that $V_1(1,r_1)=0$. Mathematica approximates this value as $r_1\approx 1.864633$. We have 
\[V_1(1,r_1)=0=T_1(1,R_1(1))=\log\left(1+\frac{1}{2^{R_1(1)}}\right)-\sum_{i=2}^{\infty}\log\left(1+\frac{1}{p_i^{R_1(1)}}\right)\] 
\[<\log\left(1+\frac{1}{2^{R_1(1)}}\right)-\sum_{i=2}^{10^5}\log\left(1+\frac{1}{p_i^{R_1(1)}}\right)=V_1(1,R_1(1)).\]
Because $V_1(1,r)$ is increasing, we find that $R_1(1)>r_1$. The important point here is that $R_1(1)\in(1.8638,2)$. One may confirm, using a simple graphing calculator, that $\displaystyle{\left(1+\frac{1}{2^r}\right)\left(\frac{3^r}{3^r+1}\right)>1+\frac{1}{3^r}}$ for all $r\in(1.8638,2)$. Therefore, we may write
\[T_1(2,R_1(2))=0=T_1(1,R_1(1))=\log\left(1+\frac{1}{2^{R_1(1)}}\right)-\sum_{i=2}^{\infty}\log\left(1+\frac{1}{p_i^{R_1(1)}}\right)\] 
\[=\log\left(\left(1+\frac{1}{2^{R_1(1)}}\right)\left(\frac{3^{R_1(1)}}{3^{R_1(1)}+1}\right)\right)-\sum_{i=3}^{\infty}\log\left(1+\frac{1}{p_i^{R_1(1)}}\right)\] 
\[>\log\left(1+\frac{1}{3^{R_1(1)}}\right)-\sum_{i=3}^{\infty}\log\left(1+\frac{1}{p_i^{R_1(1)}}\right)=T_1(2,R_1(1)).\]
As $T_1(2,r)$ is increasing on the interval $(1,2)$ (by Lemma \ref{Lem2.2}), we find that $R_1(2)>R_1(1)$. We may use a similar argument, invoking the fact that $\displaystyle{\left(1+\frac{1}{2^r}\right)\left(\frac{3^r}{3^r+1}\right)\left(\frac{5^r}{5^r+1}\right)\left(\frac{7^r}{7^r+1}\right)>1+\frac{1}{7^r}}$ for all $r\in(1.8638,2)$, to show that $R_1(4)>R_1(1)$. Thus, $R_1(1)=\min(R_1(1),R_1(2),R_1(4))$, so \\  $M_1=1$. 
\par 
Now, one may easily verify that, for all $r\in(1.67,1.98)$, 
\begin{equation} \label{Eq2.2}
1+\frac{1}{2^r}<\left(1+\frac{1}{3^r}\right)\left(1+\frac{1}{3^r}+\frac{1}{3^{2r}}\right) 
\end{equation}  
and 
\begin{equation} \label{Eq2.3} 
1+\frac{1}{3^r}>\left(\frac{5^r}{5^r-1}\right)\left(\frac{7^r+1}{7^r-1}\right).  
\end{equation} 
If we fix some integer $k\geq 2$, then, for all $r\in(1.67,1.98)$, we may use \eqref{Eq2.2} to write 
\[f_k(1,r)=\log\left(1+\frac{1}{2^r}\right)+\log\left(\sum_{j=0}^k\frac{1}{2^{jr}}\right)\] 
\[<\log\left(\left(1+\frac{1}{3^r}\right)\left(1+\frac{1}{3^r}+\frac{1}{3^{2r}}\right)\right)+\log\left(\sum_{j=0}^k\frac{1}{2^{jr}}\right)\]
\[\leq\log\left(1+\frac{1}{3^r}\right)+\log\left(\sum_{j=0}^k\frac{1}{3^{jr}}\right)+\log\left(\sum_{j=0}^k\frac{1}{2^{jr}}\right)=f_k(2,r).\] 
Similarly, for all $r\in(1.67,1.98)$, we may use \eqref{Eq2.3} to write 
\[f_k(2,r)=\log\left(1+\frac{1}{3^r}\right)+\sum_{i=1}^2\log\left(\sum_{j=0}^k\frac{1}{p_i^{jr}}\right)\] 
\[>\log\left(\left(\frac{5^r}{5^r-1}\right)\left(\frac{7^r+1}{7^r-1}\right)\right)+\sum_{i=1}^2\log\left(\sum_{j=0}^k\frac{1}{p_i^{jr}}\right)\]
\[=\log\left(\sum_{j=0}^{\infty}\frac{1}{5^{jr}}\right)+\log\left(\sum_{j=0}^{\infty}\frac{1}{7^{jr}}\right)+\log\left(1+\frac{1}{7^r}\right)+\sum_{i=1}^2\log\left(\sum_{j=0}^k\frac{1}{p_i^{jr}}\right)\] 
\[>\log\left(\sum_{j=0}^k\frac{1}{5^{jr}}\right)+\log\left(\sum_{j=0}^k\frac{1}{7^{jr}}\right)+\log\left(1+\frac{1}{7^r}\right)+\sum_{i=1}^2\log\left(\sum_{j=0}^k\frac{1}{p_i^{jr}}\right)\]
\[=\log\left(1+\frac{1}{7^r}\right)+\sum_{i=1}^4\log\left(\sum_{j=0}^k\frac{1}{p_i^{jr}}\right)=f_k(4,r).\] 
We now know that $f_k(2,r)>f_k(1,r), f_k(4,r)$ whenever $k\in\mathbb{N}\backslash\{1\}$ and $r\in(1.67,1.98)$. As our last preliminary computation, we need to evaluate $\displaystyle{\lim_{n\rightarrow\infty}R_n(2)}$. For each positive integer $n$, $R_n(2)$ is the unique solution $r\in(1,2)$ of the equation $f_n(2,r)=\log(G_n(r))$. We may rewrite this equation as $\displaystyle{\log\left(1+\frac{1}{3^r}\right)=\sum_{i=3}^{\infty}\log\left(\sum_{j=0}^n\frac{1}{p_i^{jr}}\right)}$, or, equivalently, \\ 
$\displaystyle{\left(\sum_{j=0}^n\frac{1}{2^{jr}}\right)\left(\sum_{j=0}^n\frac{1}{3^{jr}}\right)\left(1+\frac{1}{3^r}\right)}=\prod_{i=1}^{\infty}\left(\sum_{j=0}^n\frac{1}{p_i^{jr}}\right)$. 
Because the summations and the product in this equation converge (for $r>1$) as $n\rightarrow\infty$, we see that $\displaystyle{\lim_{n\rightarrow\infty}R_n(2)}$ is simply the solution (in the interval $(1,2)$) of the equation 
$\displaystyle{\lim_{n\rightarrow\infty}\left[\left(\sum_{j=0}^n\frac{1}{2^{jr}}\right)\left(\sum_{j=0}^n\frac{1}{3^{jr}}
\right)\left(1+\frac{1}{3^r}\right)\right]=\lim_{n\rightarrow\infty}
\left[\prod_{i=1}^{\infty}\left(\sum_{j=0}^n\frac{1}{p_i^{jr}}\right)\right]}$, which we may write as \begin{equation} \label{Eq2.4}
\left(\frac{2^r}{2^r-1}\right)\left(\frac{3^r+1}{3^r-1}\right)=\zeta(r).
\end{equation} 
The only solution to this equation in the interval $(1,2)$ is $r=\eta\approx 1.8877909$ \cite{Defant14}. For now, the important piece of information to note is that \\ 
$\displaystyle{\lim_{n\rightarrow\infty}R_n(2)\in(1.67,1.98)}$.     
\begin{lemma} \label{Lem2.5} 
For all integers $k>1$, $M_k=2$. 
\end{lemma}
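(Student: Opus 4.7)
The plan is to leverage the inequalities $f_k(2,r)>f_k(1,r)$ and $f_k(2,r)>f_k(4,r)$ (which hold for $k\geq 2$ and $r\in(1.67,1.98)$, as established in the paragraphs immediately preceding the statement) together with the monotonicity supplied by Lemma~\ref{Lem2.2} to pin down which element of $\{1,2,4\}$ minimizes $R_k$.

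First I would fix $k\geq 2$ and set $r^{*}:=R_k(2)$. By Lemma~\ref{Lem2.3} and Definition~\ref{Def2.2}, $r^{*}\in(1,2)$ and $T_k(2,r^{*})=0$, equivalently $f_k(2,r^{*})=\log(G_k(r^{*}))$. The next step is to verify that $r^{*}\in(1.67,1.98)$, so that the two displayed inequalities apply at $r=r^{*}$. For the upper bound, Lemma~\ref{Lem2.4} guarantees that $(R_n(2))_{n\geq 1}$ is strictly increasing, and the computation preceding the statement identifies $\lim_{n\to\infty}R_n(2)$ as the solution $\eta\approx 1.8877909$ of equation~\eqref{Eq2.4} in $(1,2)$, so $r^{*}<\eta<1.98$. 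For the lower bound, strict monotonicity of the sequence reduces the task to checking $R_2(2)>1.67$; and since $T_2(2,\cdot)$ is strictly increasing on $(1,7/3)$ by Lemma~\ref{Lem2.2}, this further reduces to verifying the single numerical inequality $T_2(2,1.67)<0$, a routine Mathematica evaluation.

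Once $r^{*}\in(1.67,1.98)$ is in hand, the inequality $f_k(1,r^{*})<f_k(2,r^{*})=\log(G_k(r^{*}))$ gives $T_k(1,r^{*})<0$. Because $T_k(1,\cdot)$ is strictly increasing on $(1,7/3)$ by Lemma~\ref{Lem2.2}, any root it may have in $(1,2)$ must lie strictly above $r^{*}$; and if no such root exists, Definition~\ref{Def2.2} forces $R_k(1)=2>r^{*}$. Either way $R_k(1)>R_k(2)$. An identical argument using $f_k(4,r^{*})<f_k(2,r^{*})$ gives $R_k(4)>R_k(2)$, so $R_k(2)$ is strictly smaller than both competitors and Definition~\ref{Def2.2} yields $M_k=2$.

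The main obstacle I anticipate is the verification $R_2(2)>1.67$: Lemma~\ref{Lem2.4} only bounds the sequence $(R_n(2))_n$ from above (by $\eta$), so confining the entire tail inside $(1.67,1.98)$ genuinely requires one positive lower-bound numerical check at $k=2$. Everything else is straightforward bookkeeping with the lemmas and inequalities already in place.
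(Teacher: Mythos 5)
Your proposal is correct and takes essentially the same route as the paper: both arguments rest on the inequalities $f_k(2,r)>f_k(1,r)$ and $f_k(2,r)>f_k(4,r)$ for $r\in(1.67,1.98)$, the monotonicity of $T_k(m,\cdot)$ from Lemma \ref{Lem2.2}, and the bracketing of the relevant root between $1.67$ and $\lim_{n\rightarrow\infty}R_n(2)=\eta<1.98$ via Lemma \ref{Lem2.4}; the paper merely phrases this as a contradiction, evaluating the inequalities at $R_k(1)$ (resp.\ $R_k(4)$) rather than directly at $R_k(2)$. Your one extra numerical check $T_2(2,1.67)<0$ is sound but avoidable, since Lemma \ref{Lem2.4} gives monotonicity from below as well as above, and combined with the already-established chain $R_1(2)>R_1(1)>1.8638$ it yields $R_k(2)>1.8638>1.67$ without any new computation.
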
 
\begin{proof} 
Fix some integer $k>1$. First, suppose $M_k=1$. This means that $R_k(1)\leq R_k(2)$. Using Lemma \ref{Lem2.4} and the facts that $R_1(1)>1.8638$ and $\displaystyle{\lim_{n\rightarrow\infty}R_n(2)<1.98}$, we have 
\[1.8638<R_1(1)<R_k(1)\leq R_k(2)<\lim_{n\rightarrow\infty}R_n(2)<1.98.\]
Therefore, $R_k(1)\in(1.67,1.98)$, so we know that $f_k(2,R_k(1))>f_k(1,R_k(1))$ $=\log(G_k(R_k(1)))$. Hence, $T_k(2,R_k(1))>0$. Lemma \ref{Lem2.2}, when coupled with our assumption that $R_k(1)\leq R_k(2)$, then implies that $T_k(2,R_k(2))>0$. However, this is impossible because Lemma \ref{Lem2.3} and the definition of $R_k(2)$ guarantee that $T_k(2,R_k(2))=0$. 
\par 
Next, suppose $M_k=4$. This means that $R_k(4)<R_k(2)$. Also, referring to Remark \ref{Rem2.2}, we see that $R_k(4)<2$. Therefore, by the definition of $R_k(4)$, we find that $f_k(4,R_k(4))=\log(G_k(R_k(4)))$. Now, we may write 
\[1.8638<R_1(1)<R_1(4)<R_k(4)<R_k(2)<\lim_{n\rightarrow\infty}R_n(2)<1.98.\] 
As $R_k(4)\in(1.67,1.98)$, we have \[f_k(2,R_k(4))>f_k(4,R_k(4))=\log(G_k(R_k(4))).\] Thus, $T_k(2,R_k(4))>0$. Using Lemma \ref{Lem2.2} and our assumption that \\ 
$R_k(4)<R_k(2)$, we get $T_k(2,R_k(2))>0$. Again, this is a contradiction. 
\end{proof} 
We now culminate our work with a final definition and theorem. 
\begin{definition} \label{Def2.3}
Let $\eta_1$ be the unique real number in the interval $(1,2)$ that satisfies \[\left(1+\frac{1}{2^{\eta_1}}\right)^2=\frac{\zeta(\eta_1)}{\zeta(2\eta_1)}.\] For each integer $k>1$, let $\eta_k$ be the unique real number in the interval $(1,2)$ that 
satisfies \[\left(\sum_{j=0}^k\frac{1}{2^{\eta_kj}}\right)\left(\sum_{j=0}^k\frac{1}{3^{\eta_kj}}\right)\left(1+\frac{1}{3^{\eta_k}}\right)=\frac{\zeta(\eta_k)}{\zeta((k+1)\eta_k)}.\]
\end{definition} 
\begin{remark} \label{Rem2.3}
Using Definition \ref{Def2.1} to manipulate the equation \\ $f_k(M_k,R_k(M_k))=\log(G_k(R_k(M_k)))$ and using the fact that 
\[M_k=\begin{cases} 1, & \mbox{if } k=1; \\ 2, & \mbox{if } k\in\mathbb{N}\backslash\{1\}, \end{cases}\]
one can see that $\eta_k$ is simply $R_k(M_k)$. Furthermore, Lemma \ref{Lem2.2} tells us that, for each positive integer $k$, the value of $\eta_k$ is, in fact, unique.
\end{remark}
\begin{theorem} \label{Thm2.3} 
Let $k$ be a positive integer. If $r>1$, then the range of the function $\sigma_{-r,k}$ is dense in the interval $\displaystyle{\left[1,\frac{\zeta(r)}{\zeta((k+1)r)}\right)}$ if and only if $r\leq\eta_k$.
\end{theorem}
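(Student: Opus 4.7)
The plan is to combine the machinery of Theorems \ref{Thm2.1} and \ref{Thm2.2} with the unrestricted density result from \cite{Defant14}. By Remark \ref{Rem2.3}, $\eta_k=R_k(M_k)$, and Lemma \ref{Lem2.3} together with Definition \ref{Def2.2} places $\eta_k$ in $(1,2)$.

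For the ``if'' direction, I assume $r\leq\eta_k$, so $r\leq 2$ and Theorem \ref{Thm2.2} applies. It suffices to verify $T_k(m,r)\leq 0$ for each $m\in\{1,2,4\}$. The definition of $M_k$ gives $R_k(m)\geq R_k(M_k)=\eta_k\geq r$. By Definition \ref{Def2.2} and the strict monotonicity of $T_k(m,\cdot)$ on $(1,7/3)$ from Lemma \ref{Lem2.2}, either $R_k(m)=2$ (so $T_k(m,\cdot)<0$ throughout $(1,2)$) or $T_k(m,R_k(m))=0$ and $T_k(m,r)\leq 0$ for $r\leq R_k(m)$; in both cases $T_k(m,r)\leq 0$, so the range of $\sigma_{-r,k}$ is dense in $[1,G_k(r))$.

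For the ``only if'' direction, I argue the contrapositive: if $r>\eta_k$, the range is not dense. In the sub-case $\eta_k<r\leq 2$, Lemma \ref{Lem2.2} applied at $m=M_k$ yields $T_k(M_k,r)>T_k(M_k,\eta_k)=0$, whence Theorem \ref{Thm2.1} denies density. The main obstacle is the sub-case $r>2$, since Lemma \ref{Lem2.2} and Theorem \ref{Thm2.2} apply only for $r<7/3$ and $r\leq 2$ respectively. I would handle it by invoking the unrestricted result: because $r>2>\eta$, \cite{Defant14} shows that the range of $\sigma_{-r}$ is not dense in $[1,\zeta(r))$, and the analog of Theorem \ref{Thm2.1} for the unrestricted case (proved by the identical argument, with $\sum_{j=0}^k$ replaced by $\sum_{j=0}^{\infty}$) produces some $m\in\mathbb{N}$ with $\log(1+p_m^{-r})>\sum_{i=m+1}^{\infty}-\log(1-p_i^{-r})$. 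Since $\sum_{j=0}^k p_i^{-jr}<(1-p_i^{-r})^{-1}$, monotonicity of $\log$ transfers this strict inequality to $\log(1+p_m^{-r})>\sum_{i=m+1}^{\infty}\log\bigl(\sum_{j=0}^k p_i^{-jr}\bigr)$, i.e., $T_k(m,r)>0$, whereupon Theorem \ref{Thm2.1} gives non-density.
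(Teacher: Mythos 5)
Your proof is correct, and the first two-thirds of it (the ``if'' direction and the sub-case $\eta_k<r\leq 2$) coincide with the paper's argument: both reduce to the sign of $T_k(m,r)$ via Lemma \ref{Lem2.2}, Definition \ref{Def2.2}, and Theorems \ref{Thm2.1} and \ref{Thm2.2}. Where you genuinely diverge is the tail case. The paper keeps everything internal: it extends the monotonicity argument up to $r<\frac{7}{3}$ (Lemma \ref{Lem2.2} is stated on that interval precisely for this purpose) and then, for $r>\frac{7}{3}$, verifies $f_k(1,r)>\log(G_k(r))$ directly by proving the elementary inequality $\left(1+\frac{1}{2^r}\right)^2>\zeta(r)$ (checked by hand on $(\frac{7}{3},3]$ and by an integral comparison for $r>3$). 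You instead handle all of $r>2$ by a transfer principle: a ``gap certificate'' $\log(1+p_m^{-r})>\sum_{i=m+1}^{\infty}-\log(1-p_i^{-r})$ for the unrestricted $\sigma_{-r}$ implies the corresponding certificate for $\sigma_{-r,k}$, because each factor $\sum_{j=0}^{k}p_i^{-jr}$ is dominated by the full geometric series. That observation is clean and uniform in $k$, and it explains conceptually why restricting the domain can only make density harder to destroy in the tail; its cost is that it imports not just the headline theorem of \cite{Defant14} but also the unrestricted analogue of Theorem \ref{Thm2.1} (you need the implication ``not dense $\Rightarrow$ some $m$ violates the inequality,'' which is the harder, greedy-construction direction of that analogue). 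That analogue is indeed established in \cite{Defant14}, so the citation is legitimate, but if you want to stay self-contained you could instead quote the explicit inequality $\left(\frac{2^r}{2^r-1}\right)\left(\frac{3^r+1}{3^r-1}\right)>\zeta(r)$ for $r>\eta$, which is the concrete certificate \cite{Defant14} actually produces, and feed that into your transfer step; or simply adopt the paper's direct estimate, which avoids the external dependence altogether.
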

\begin{proof}
Let $k$ be a positive integer, and let $\displaystyle{r\in\left(1,\frac{7}{3}\right)}$. Suppose $r\leq\eta_k$. Then, by the definition of $M_k$ and the fact that $\eta_k=R_k(M_k)$, we see that $r\leq R_k(m)$ for all $m\in\{1,2,4\}$. Lemma \ref{Lem2.2} then guarantees that $T_k(m,r)\leq 0$ for all $m\in\{1,2,4\}$, which means that $f_k(m,r)\leq\log(G_k(r))$ for all $m\in\{1,2,4\}$. Theorem \ref{Thm2.2} then tells us that the range of $\log\sigma_{-r,k}$ is dense in the interval $[0,\log(G_k(r)))$, which implies that the range of $\sigma_{-r,k}$ is dense in $[1,G_k(r))$. Now, suppose that $r>\eta_k$. Then $T_k(M_k,r)>T_k(M_k,R_k(M_k))=0$, so, $f_k(M_k,r)>\log(G_k(r))$. This means that the range of $\log\sigma_{-r,k}$ is not dense in $[0,\log(G_k(r)))$, which is equivalent to the statement that the range of $\sigma_{-r,k}$ is not dense in $[1,G_k(r))$. 
\par 
We now need to show that, for any $k\in\mathbb{N}$, the range of $\sigma_{-r,k}$ is not dense in $[0,\log(G_k(r)))$ for all $\displaystyle{r>\frac{7}{3}}$.
To do so, it suffices to show that $f_k(1,r)>\log(G_k(r))$ for all $\displaystyle{r>\frac{7}{3}}$, which means that we only need to show that $\displaystyle{\left(1+\frac{1}{2^r}\right)\sum_{j=0}^k\frac{1}{2^{jr}}>G_k(r)}$ for $\displaystyle{r>\frac{7}{3}}$. Now, because $G_k(r)<\zeta(r)$, we see that it suffices to show that $\displaystyle{\left(1+\frac{1}{2^r}\right)^2>\zeta(r)}$ for $\displaystyle{r>\frac{7}{3}}$. One may easily verify that this inequality holds for $\displaystyle{\frac{7}{3}<r\leq 3}$. For $r>3$, we have 
\[\left(1+\frac{1}{2^r}\right)^2>1+\frac{1}{2^r}+\frac{1}{2}\left(\frac{1}{2^{r-1}}\right)>1+\frac{1}{2^r}+\frac{1}{(r-1)2^{r-1}}\] 
\[=1+\frac{1}{2^r}+\int_2^{\infty}\frac{1}{x^r}dx>\zeta(r).\]

\end{proof}

\section{An Open Problem} 
As the author has done for a density problem related to generalizations divisor functions without restricted domains \cite{Defant14}, we pose a question related to the number of ``gaps" in the range of $\sigma_{-r,k}$ for various $k$ and $r$. That is, given positive integers $k$ and $L$, what are the values of $r>1$ such that the closure of the range of $\sigma_{-r,k}$ is a union of exactly $L$ disjoint subintervals of $\displaystyle{\left[1,\frac{\zeta(r)}{\zeta((k+1)r)}\right]}$? 
\section{Acknowledgements} 
Dedicated to T.B.B. 
\par 
The author would like to thank Professor Pete Johnson for inviting him to the 2014 REU Program in Algebra and Discrete Mathematics at Auburn University.

\bigskip
\hrule
\bigskip

\noindent 2010 {\it Mathematics Subject Classification}:  Primary 11B05; Secondary 11A25. 

\noindent \emph{Keywords: } Dense, divisor function, restriction

\end{document}